\newtheorem{theorem}{Theorem}[section]
\theoremstyle{example}\newtheorem{example}[theorem]{Example}}
\theoremstyle{remark}\newtheorem{remark}[theorem]{Remark}}
\begin{document}
\title[Markoff--Rosenberger triples in geometric progression]{Markoff--Rosenberger triples\\ in geometric progression}
\author{Enrique Gonz\'alez--Jim\'enez}
\address{Universidad Aut{\'o}noma de Madrid, Departamento de Matem{\'a}ticas and Instituto de Ciencias Matem{\'a}ticas (ICMat), Madrid, Spain}
\email{enrique.gonzalez.jimenez@uam.es}
\urladdr{http://www.uam.es/enrique.gonzalez.jimenez}
\thanks{The author was partially  supported by the grant MTM2012--35849.}

\subjclass[2010]{Primary: 11D25; Secondary: 11D45, 14G05}
\keywords{Markoff equation, geometric progression.}

\date{\today}
\begin{abstract}
We study solutions of the Markoff--Rosenberger equation $ax^2+by^2+cz^2 = dxyz$ whose coordinates belong to the ring of integers of a number field and form a geometric progression.
\end{abstract}

\maketitle


\section{Introduction}
The study of sequences of points on algebraic varieties is nowadays a topic in number theory studied by many authors. A special interest has been shown for the case of arithmetic progressions on plane curves. Let $C:F(x,y)=0$ be a plane curve defined over a field $K$. An arithmetic progression (a.p. for short) of length $n$ on $C$ is a sequence of points $(x_1,y_1),\dots,(x_n,y_n)\in C(K)$ such that $x_1,\dots,x_n$ form an arithmetic progression. If $C$ is an elliptic curve, many researchers have studied this problem depending on how $C$ is described: Weierstrass form  \cite{Bremner1999,Campbell2003,Garcia-Selfa-Tornero2005, Garcia-Selfa-Tornero2006, Alvarado2011,Schwartz-Solymosi-Zeeuw2011}, Mordell \cite{Mohanty1975,Lee-Velez1992}, congruent  \cite{Bremner-Silverman-Tzanakis2000,Spearman2011}, quartic form \cite{Ulas2005, MacLeod2006, Alvarado2010}, Edwards \cite{Moody2011}, Huff \cite{Moody2011b}. In genus greater than one the case of $C$ being a hyperelliptic curve has been treated in \cite{Ulas2009,Alvarado2009,Ulas2012}. As for the case of genus zero is concerned:  Pellian equations \cite{Dujella-Petho-Tadic2008, Petho-Ziegler2008,Aguirre-Dujella-Peral2012}; conic section \cite{Alvarado-Goins2012}.

Recently, some authors have considered similar problems  but replacing arithmetic progressions by geometric progressions (g.p. for short): Bérczes and Ziegler \cite{Berczes-Ziegler} and Bremner and Ulas \cite{Bremner-Ulas} on Pell type equations. 

One can take another point of view and consider a hypersurface $S:F(x_1,\dots,x_n)=0$ in $\mathbb A^n$. One may study if the coordinates of a point in $S$, considered as a sequence, satisfy some property. For example, if they form an a.p.. The case of solutions of norm equation such that their coordinates form an a.p. has been deeply studied in \cite{Berczes-Hajdu-Petho2010,Berczes-Petho2004,Berczes-Petho2006,Bazso2007}. Recently, the author and J. M. Tornero \cite{Gonzalez-Jimenez-Tornero2013} have studied the case of triples in a.p. on the Markoff--Rosenberger equation over number fields. That is, if $\mathcal O_K$ denotes the ring of integers of a number field $K$, we studied triples $x,y,z\in \mathcal O_K$ in a.p. such that $ax^2+by^2+cz^2=dxyz$ for some $a,b,c,d\in\mathcal O_K$.  In the current paper, we fix our attention on the case of g.p. instead of a.p. to prove results for g.p. analogous to those obtained in \cite{Gonzalez-Jimenez-Tornero2013}. That is, our main objective is to study the set
$$
\mathcal{GP}_{(a,b,c,d)}(K):=\{\mbox{$\mathcal{O}_{K}$--non-trivial triples in g.p. to $ax^2+by^2+cz^2=dxyz$}\}.
$$
In section \ref{s0}, we will see that $\mathcal{GP}_{(a,b,c,d)}(K)$ is in bijection with a subset of the affine solutions over $\mathcal{O}_K$ of a genus zero curve, denoted by  $\mathcal G_{(a,b,c,d)}$. Then the description of  $\mathcal{GP}_{(a,b,c,d)}(K)$ is translated to the computation of integer points of a genus zero curve. For this task we will use mainly and heavily the research of Poulakis on solving genus zero Diophantine equations, developed in successive papers with Alvanos, Bilu and Voskos  \cite{Alvanos-Bilu-Poulakis2009,Alvanos-Poulakis2011,Poulakis-Voskos2000}. In section \ref{NF}, we present an algorithm based on Alvanos and Poulakis' work \cite{Alvanos-Poulakis2011} that allows us to compute $\mathcal G_{(a,b,c,d)}(\mathcal{O}_K)$. We show how the algorithm described in this section works on some examples not covered by the theoretical results from section \ref{s_results}. 

All the main theoretical results appear at section \ref{s_results}. There, we show that infinitely many integral Markoff--Rosenberger triples in g.p. over a number field $K$ can exist only if $K$ is neither the rational field nor a quadratic imaginary field. When $K$ is the rational field we give an explicit description of this finite set. Finally we fix our attention to our original goal, the study of the generalized Markoff equation: $x^2+y^2+z^2=dxyz$ where $d\in\mathbb Z_{>0}$. We obtain the finite set $\mathcal{GP}_{(1,1,1,d)}(K)$ when $K$ is the rational field or an imaginary quadratic field. Furthermore, for the case when $K$ is a real quadratic field we give a completely explicit description of this set, that could be either empty or infinite. 


\section{A genus zero curve}\label{s0}

Our starting point is the Markoff equation
\begin{equation}\label{Markoff}
x^2+y^2+z^2=3xyz.
\end{equation}
The integer solutions (so--called Markoff triples) of this Diophantine equation were deeply studied by Markoff in \cite{Markoff1879, Markoff1880} obtaining, among other results, that infinitely many Markoff triples exist. Later on, the Markoff equation has been generalized by several authors. We are going to focus our attention on the one studied by Rosenberger \cite{Rosenberger1979}:
\begin{equation}\label{MR}
ax^2+by^2+cz^2=dxyz.
\end{equation}
We will call this equation the Markoff--Rosenberger equation and its solutions will be called Markoff--Rosenberger triples. Notice that Rosenberger imposed some extra conditions on the coefficients $a,b,c,d\in\mathbb{N}$: $a|d$, $b|d$, $c|d$ and $(a,b)=(a,c)=(b,c)=1$. With these requirements he proved that non--trivial integral Markoff--Rosenberger triples exist only if $(a,b,c,d)$ belongs to the set $\{(1,1,1,1)$, $(1,1,1,3)$, $ (1,1,2,2),(1,1,2,4)$,$(1,2,3,6),(1,1,5,5)\}$. We will not assume these extra conditions for the rest of the paper. 

This paper is devoted to the study of Markoff--Rosenberger triples that form a g.p. over $\mathcal{O}_K$. Let $a,b,c,d\in\mathcal O_K$ and $x,y,z\in \mathcal{O}_K$ be a Markoff--Rosenberger triple in g.p.. Then there exist $\alpha,\beta\in\mathcal{O}_K$ such that 
$$
x=\alpha,\quad y=\alpha\beta, \quad z=\alpha\beta^2, 
$$
satisfying
$$
\alpha^2(c\beta^4-d\alpha\beta^3+b\beta^2+a)=0.
$$
Therefore, if we exclude the solutions with $\alpha=0$ that correspond to the trivial solution $(x,y,z)=(0,0,0)$, we obtain that non-trivial Markoff--Rosenberger triples in g.p. over $\mathcal{O}_K$ are in bijection with the affine solutions over $\mathcal{O}_K$ of the curve 
$$
\mathcal G=\mathcal G_{(a,b,c,d)}\,:\,F(X,Y)=cY^4-dXY^3+bY^2+a=0.
$$
with $X\ne 0$. Namely:
\begin{equation}\label{biye}
\begin{array}{ccc}
\mathcal G_{(a,b,c,d)}(\mathcal O_K) &\longrightarrow &\mathcal{GP}_{(a,b,c,d)}(K).\\[1.5mm]
(X,Y)&\longmapsto &(X,XY, XY^2)\\[0.2mm]
(x,z/y)&\longleftarrow{{\hspace{-5pt}\shortmid}} &(x,y,z)\\
\end{array}
\end{equation}
Let $\widetilde{F}(X,Y,Z)$ be the homogenization of $F(X,Y)$ and denote by $\widetilde{\mathcal G}$ the projective curve defined by $\widetilde{\mathcal G}\,:\,\widetilde{F}(X,Y,Z)=0$. Now, the curve $\widetilde{\mathcal G}$ has two points at infinity: $[c\,:\,d\,:\,0]$ and the singular point $[1:0:0]$. In particular, $\widetilde{\mathcal G}$ has genus $0$ and the birational map
\begin{equation}\label{para}
\begin{array}{cccl}
\widetilde{\psi}:&\mathbb P^1&\longrightarrow &\widetilde{\mathcal G}\\
             & [U:V] & \longmapsto & [cU^4+bV^2U^2+aV^4 \,:\, dU^4\,:\, dU^3V]\\
\end{array}
\end{equation}
gives a parametrization of the curve $\widetilde{\mathcal G}$ over $K$.

Denote by $\widetilde{\mathcal G}_\infty$ the set of infinity places of the field $\overline{K}(C)$. Then by \cite[Lemma 2.2]{Poulakis-Voskos2000} we have that 
$$
|\widetilde{\mathcal G}_\infty|=|\{[U:V]\in \mathbb P^1\,:\, dU^3V=0\}|=|\{[1:0],[0:1]\}|=2.
$$
Note that both points at infinity are defined over $\mathbb Q$.

\section{The general algorithm over number fields}\label{NF}
Let $K$ be a number field of degree $n=[K:\mathbb Q]$ and $a,b,c,d\in\mathcal O_K$. Our objective in this section is to describe the set 
$$
\mathcal G(\mathcal{O}_K)=\{(X,Y)\in{\mathcal{O}_{K}}^2\,|\, F(X,Y)=0\}.
$$
For this purpose we use the affine part of the parametrization $\widetilde{\psi}$:
\begin{equation}\label{parafin}
\begin{array}{cccl}
{\psi}:&\mathbb P^1&\longrightarrow &{\mathcal G}\\
             & [U:V] & \longmapsto & \displaystyle \left(\frac{cU^4+bV^2U^2+aV^4}{dU^3V},\frac{U}{V}\right)\\
\end{array}
\end{equation}
and we are going to develop an algorithm heavily based on the Alvanos and Poulakis' algorithm \texttt{INTEGRAL-POINTS2A} from the paper \cite{Alvanos-Poulakis2011}. In fact, the algorithm presented here is just the application of \texttt{INTEGRAL-POINTS2A} to the curve $\mathcal G$. 

\noindent {\bf Step 1}:  Let $\alpha\in\mathcal O_K$ and denote by $\widehat{\alpha}=\alpha^s$ where $s=0$ or $s=1$ depending on whether $\alpha\in\mathbb Z$ or $\alpha\notin\mathbb Z$. Let $E=\mathbb Q(\widehat{c})$ and $L=\mathbb Q(\widehat{d}\,)$. Compute
 $$
\delta_0=gcd\left(\frac{c}{\widehat{c}}\,\mathcal N_{E}(\widehat{c}), \frac{d}{\widehat{d}}\,\mathcal N_{L}(\widehat{d}\,)\right)\in\mathbb Z.
 $$ 
We denote by $\mathcal{N}_{K}$ the absolute norm map for a number field $K$.

\noindent {\bf Step 2}:  Compute 
$$
M=\left\{t\in\mathcal O_K\,|\, \mbox{$\mathcal N_K(t)$  divides $\mathcal N_K(a)\delta_0^{4n}$}\right\}_{/ \sim},
$$
where $\sim$ denotes equivalence class (modulo units of $\mathcal O_K$).

\noindent {\bf Step 3}: Compute a basis for the unit group $\mathcal U(\mathcal O_K)$. By Dirichlet's Unit Theo\-rem we have that $\mathcal U(\mathcal O_K)$ is a finitely generated abelian group and therefore:
$$
\mathcal U(\mathcal O_K)=\langle\zeta_k \rangle\oplus\langle \varepsilon_1,\dots,\varepsilon_r\rangle
$$
where $\zeta_k$ is a $k$-th root of unity ($k$ is the torsion order) and $r$ is the rank. Moreover, $r=r_1+r_2-1$ where $r_1$ denotes the number of real embeddings of $K$ and $r_2$ the number of complex pairs embeddings of $K$. In particular, $\mathcal U(\mathcal O_K)$ is finite if and only if $K=\mathbb Q$ or $K$ is an imaginary quadratic field.

\noindent {\bf Step 4}: For any $t\in M$ compute $\tau(i,t)$ the order of the class of $\varepsilon_i$ in $\mathcal U (\mathcal O_K/\delta_0dt^3)$. Note that if $\delta_0dt^3\in \mathcal U(\mathcal O_K)$ then we define $\tau(i,t)=1$.

\noindent {\bf Step 5}: Now, for every $t\in M$ compute the set $H(t)$ of units $\eta=\zeta_k^l\varepsilon_1^{l_1}\cdots\varepsilon_r^{l_r}$ with $0\le l<k,0\le l_i<\tau(i,t)$ for $i=1,\dots,r$ such that $\psi(t\eta,\delta_0)\in \mathcal O_K^2$. From this condition we obtain that $d(t\eta)^3\delta_0$ divides $c(t\eta)^4+b\delta_0^2(t\eta)^2+a\delta_0^4$ and $\delta_0$ divides $t\eta$. Then
$$
H(t)=\left\{\eta=\zeta_k^l\varepsilon_1^{l_1}\cdots\varepsilon_r^{l_r}\,\Big|\,
\begin{array}{c}
0\le l<k,\quad 0\le l_i<\tau(i,t),\,\,\,i=1,\dots,r\\[1mm]
\mbox{$d(t\eta)^3\delta_0|(c(t\eta)^4+b\delta_0^2(t\eta)^2+a\delta_0^4)$ and $\delta_0|t\eta$}
\end{array}
\right\}.
$$
Notice that in the case that $\delta_0\ne\pm 1$ we have $\delta_0$ divides $t$. 

\noindent {\bf OUTPUT}: Denote by $\Theta(t)=\left\{\prod_{i=1}^r \varepsilon_i^{\tau(i,t)z_i}\,\Big|\,z_i\in\mathbb Z\,,i=1,\dots,r\right\}$. Then 
$$
\displaystyle \mathcal G(\mathcal O_K)=\bigcup_{t\in M}\{\psi(t\eta\varepsilon,\delta_0)\,|\, \eta\in H(t)\,\,\,\mbox{and}\,\,\, \varepsilon\in\Theta(t)\}.
$$

Note that in order to apply the algorithm for some fixed values $a,b,c,d$ in the ring of integers of some number field $K$ we should be able to solve some problems. All of them are sorted out in \verb|Magma| \cite{magma2.18-8}. In the following table we show the main problems to be solved and the \verb|Magma| functions that may be used:\\
\begin{center}
\begin{tabular}{|c|c|c|}
\hline 
Step & Problems & \verb|Magma| functions\\
\hline
2 & $M$ & \verb|NormEquation|\\
\hline
3 & $\mathcal U(\mathcal O_K)$ & \verb|UnitGroup|\\
\hline
4 & $\tau(i,t)$ & \verb|quo|, \verb|MultiplicativeGroup|, \verb|Order| \\
\hline
5 & $\psi(t\eta,\delta_0)\in \mathcal O_K^2$ & \verb|IsIntegral| \\
\hline
\end{tabular}
\end{center}

\subsection{The algorithm at work}
We recall that Rosenberger \cite{Rosenberger1979} proved that  integral Markoff--Rosenberger triples exist if and only if 
$$
(a,b,c,d) \in \left\{ (1,1,1,1), \; (1,1,1,3), \; (1,1,2,2), \; (1,1,2,4), \; (1,2,3,6), \; (1,1,5,5) \right\},
$$
with the extra requirements: $a,b,c,d\in\mathbb{N}$: $a|d$, $b|d$, $c|d$ and $(a,b)=(a,c)=(b,c)=1$. Note that Theorem \ref{TZ} applied to the previous cases tells us that 
\begin{center}
\begin{tabular}{|c|c|}
\hline 
$(a,b,c,d)$ & $\mathcal{GP}_{(a,b,c,d)}(\mathbb Q)$\\
\hline
$(1,1,1,1)$ & $(\pm 3,3,\pm 3)$\\
\hline
$(1,1,1,3)$ & $(\pm 1, 1,\pm 1)$\\
\hline
$(1,1,2,2)$ & $(\pm 2,2,\pm 2)$\\
\hline
$(1,1,2,4)$ & $(\pm 1,1,\pm 1)$\\
\hline
$(1,2,3,6)$ & $(\pm 1, 1,\pm 1)$\\
\hline
$(1,1,5,5)$ & $\emptyset$\\
\hline
\end{tabular}
\end{center}
That is, integral Markoff--Rosenberger triples in g.p. exist for all these cases except for the last one. In this section we are going to study the case $(1,1,5,5)$ over the first two quadratic real fields. Thanks to Theorem \ref{teo} we know that if $D$ is a squarefree positive integer then a Markoff--Rosenberger triple in g.p. over $\mathbb Q(\sqrt D)$ exists if and only if infinitely many exist. We show one case of each of these possibilities. In particular, in $\mathbb Q(\sqrt 2)$ we describe the infinitely many triples in g.p.; meanwhile in $\mathbb Q(\sqrt 3)$ we will show that triples in g.p. do not exist. Let us apply our algorithm for this purpose. 

Let $K$ be a real quadratic field. We have that $\delta_0=5$, in particular $M=\left\{t\in\mathcal O_K\,|\, \mathcal N_{K}(t)=\pm 5^{k},\,k=2,\dots,8\right\}_{/ \sim}$. Moreover, a fundamental unit $\varepsilon_D$ exists, such that $\mathcal U(\mathcal O_K)=\{\pm \varepsilon_D^k\,|\, k\in\mathbb Z\}$. Now we work out the cases $K=\mathbb Q(\sqrt 2)$ and $K=\mathbb Q(\sqrt 3)$:

\noindent $\bullet$ {$K=\mathbb Q(\sqrt 2)$}. The fundamental unit is $\varepsilon_2=1+\sqrt{2}$ which has order $12$ on $\mathcal{U}(\mathcal O_K/5)$. We have $M=\bigcup_{k=1}^4\{5^k,5^k\varepsilon_2 \}$. Then, for any $t\in M$ we obtain that $H(t)=\emptyset$, except in the following two cases:
$$
H(5\varepsilon_2)=\{\pm\varepsilon_2^2,\pm\varepsilon_2^8\} \quad\mbox{and}\quad H(5)=\{\pm\varepsilon_2^3,\pm\varepsilon_2^9\}.
$$
Now, we have $\Theta(t)=\{\varepsilon_2^{12z}\,|\, z\in\mathbb Z\}$ for any $t\in M$. Therefore
$$
\displaystyle \mathcal{G}_{(1,1,5,5)}(\mathcal{O}_{\mathbb Q(\sqrt 2)})=\bigcup_{k\in \{1,2\}} \bigcup_{z\in\mathbb Z}\{\psi(\pm 5\varepsilon_2^{3^k+12z},5)\}.
$$
Then we have obtained infinitely many triples in g.p., described by
$$
\mathcal{GP}_{(1,1,5,5)}(\mathbb Q(\sqrt 2))=\bigcup_{k=1}^2 \bigcup_{z\in\mathbb Z}\left\{(\pm\beta,\beta \gamma,\pm\beta\gamma^2 )\,\left|\,
\begin{array}{l}\gamma=\varepsilon_2^{3^k+12z},\,\\ \beta=\gamma+5^{-1}(\gamma^{-1}+\gamma^{-3})\end{array}\right.\right\}.
$$

\noindent $\bullet$ {$K=\mathbb Q(\sqrt 3)$}. The fundamental unit is $\varepsilon_3=2+\sqrt{3}$ which has order $3$ on $\mathcal{U}(\mathcal O_K/5)$. In this case, we have $M=\{5^k\,|\,k=1,2,3,4 \}$. Finally, we obtain that $H(t)=\emptyset$ for all $t\in M$. That is, $\mathcal{G}_{(1,1,5,5)}(\mathcal{O}_{\mathbb Q(\sqrt 3)})=\emptyset$, and therefore
$$
\mathcal{GP}_{(1,1,1,5)}(\mathbb Q(\sqrt 3))=\emptyset.
$$


\section{Theoretical results}\label{s_results}
This section is dedicated to show the theoretical results obtained on Markoff--Rosenberger triples in g.p. over number fields. 
\begin{theorem}\label{teo}
Let $K$ be a number field and $a,b,c,d\in\mathcal O_K$. Assume that $\mathcal G(\mathcal{O}_K)$ contains a non--singular point, then
the set $\mathcal{GP}_{(a,b,c,d)}(K)$ is finite if and only if $K=\mathbb{Q}$ or $K$ is an imaginary quadratic field.
\end{theorem}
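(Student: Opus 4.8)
The plan is to reduce the statement to a question about integral points on the genus zero curve $\mathcal G = \mathcal G_{(a,b,c,d)}$, as set up in Section~\ref{s0}, and then to invoke the dichotomy already built into the algorithm of Section~\ref{NF}. By the bijection \eqref{biye}, the set $\mathcal{GP}_{(a,b,c,d)}(K)$ is in bijection with the set of $(X,Y)\in\mathcal G(\mathcal O_K)$ with $X\neq 0$; up to the finitely many points with $X=0$ (which in fact force $a=0$, a degenerate case, so one should note this), finiteness of $\mathcal{GP}_{(a,b,c,d)}(K)$ is equivalent to finiteness of $\mathcal G(\mathcal O_K)$. So the theorem will follow once we show: $\mathcal G(\mathcal O_K)$ is finite if and only if $K=\mathbb Q$ or $K$ is imaginary quadratic, \emph{under the hypothesis that $\mathcal G(\mathcal O_K)$ contains a non-singular point}.

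For the ``if'' direction, suppose $K=\mathbb Q$ or $K$ imaginary quadratic. The key geometric input is already recorded in the excerpt: $\mathcal G$ has genus $0$, and its smooth projective model $\widetilde{\mathcal G}$ has exactly two points at infinity (equivalently, $|\widetilde{\mathcal G}_\infty| = 2$), both defined over $\mathbb Q$. By the function-field / Siegel-type theory underlying Poulakis' work (e.g. \cite{Poulakis-Voskos2000, Alvanos-Poulakis2011}), a genus zero affine curve with at least two points at infinity has finitely many $S$-integral points whenever the relevant unit group is finite; and $\mathcal U(\mathcal O_K)$ is finite precisely when $K=\mathbb Q$ or $K$ is imaginary quadratic, by Dirichlet's Unit Theorem (this is exactly the remark made in Step~3 of the algorithm). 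Concretely, the output formula of the algorithm in Section~\ref{NF} expresses $\mathcal G(\mathcal O_K)$ as a finite union over $t\in M$ (a finite set) of sets $\{\psi(t\eta\varepsilon,\delta_0) : \eta\in H(t),\ \varepsilon\in\Theta(t)\}$; when $r=0$ the set $\Theta(t)$ is trivial and $H(t)$ is finite, so the whole set is finite. I would cite this directly rather than re-derive it.

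For the ``only if'' direction --- here is where the non-singularity hypothesis is used --- assume $K$ has a unit of infinite order, i.e. $r=r_1+r_2-1\ge 1$, and assume $\mathcal G(\mathcal O_K)$ contains a non-singular point $P_0 = (X_0,Y_0)$. Under the parametrization $\psi$ of \eqref{parafin}, $P_0$ corresponds to some $[U_0:V_0]\in\mathbb P^1(K)$, i.e. to a ratio $t_0 = U_0/V_0 \in K$ with $Y_0 = t_0$ and $X_0 = (c U_0^4 + b V_0^2 U_0^2 + a V_0^4)/(d U_0^3 V_0)$. The idea is that the \emph{orbit} of $P_0$ under scaling the homogeneous coordinate $[U:V]$ by units produces infinitely many further integral points: for a unit $\varepsilon\in\mathcal U(\mathcal O_K)$, replacing $(U_0,V_0)$ by $(\varepsilon U_0, \varepsilon V_0)$ leaves $\psi$ unchanged, but the point is to track how integrality is preserved along an arithmetic progression in the \emph{exponent} — exactly the mechanism visible in the $\mathbb Q(\sqrt 2)$ example, where $\Theta(t) = \{\varepsilon_2^{12z} : z\in\mathbb Z\}$ generates infinitely many triples from a single $t$. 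Precisely: since $P_0\in\mathcal G(\mathcal O_K)$ is non-singular, the local conditions defining membership in $H(t_0)$ (the divisibility $d(t\eta)^3\delta_0 \mid c(t\eta)^4 + b\delta_0^2(t\eta)^2 + a\delta_0^4$ and $\delta_0\mid t\eta$) are satisfied by the unit realizing $P_0$, and these conditions depend on $\eta$ only through its class modulo $\delta_0 d t_0^3$; hence they are preserved under multiplication by any element of $\Theta(t_0) = \{\prod \varepsilon_i^{\tau(i,t_0)z_i}\}$, which is infinite as soon as $r\ge 1$. Distinctness of the resulting points follows because $Y = U/V = t_0\varepsilon$ takes infinitely many values as $\varepsilon$ ranges over the infinite set $\Theta(t_0)$ (a unit of infinite order cannot satisfy $t_0\varepsilon = t_0\varepsilon'$ for $\varepsilon\neq\varepsilon'$). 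Therefore $\mathcal G(\mathcal O_K)$ — hence $\mathcal{GP}_{(a,b,c,d)}(K)$ — is infinite.

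The main obstacle is the bookkeeping in the ``only if'' direction: one must check carefully that the integrality conditions cutting out $H(t)$ genuinely factor through the finite quotient $\mathcal O_K/\delta_0 d t^3$ (so that $\Theta(t)$-translates of a good $\eta$ remain good), and that the non-singular point really does arise as $\psi$ of a \emph{finite} $t_0\in K$ rather than from one of the two points at infinity — the hypothesis that $\mathcal G(\mathcal O_K)$ contains a non-singular point is precisely what guarantees we are not stuck in a degenerate situation where the only integral points are the singular point $[1:0:0]$ or contributions forced to be finite in number. I expect this verification to amount to a direct unwinding of the structure of the algorithm in Section~\ref{NF}, together with the observation that $\deg F = 4$ keeps all the relevant quantities polynomial in $t$, so no genuinely new estimate is needed beyond what \cite{Alvanos-Poulakis2011} already supplies.
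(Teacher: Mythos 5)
Your proof is correct, but it takes a genuinely different route from the paper. The paper disposes of the whole theorem in one stroke: it notes that $\mathcal G$ has genus $0$, that $|\widetilde{\mathcal G}_\infty|=2$, and that both points at infinity are defined over $\mathbb Q$, and then invokes the complete characterization of affine curves with infinitely many integral points from \cite[Theorem 1.2]{Alvanos-Bilu-Poulakis2009}, which directly yields the ``finite iff $K=\mathbb Q$ or imaginary quadratic'' dichotomy. You instead work one level down, at the level of the algorithm of \cite{Alvanos-Poulakis2011}: for the ``if'' direction you observe that the output is a finite union of finite sets once $\mathcal U(\mathcal O_K)$ is finite, and for the ``only if'' direction you produce the infinite family explicitly as the $\Theta(t)$-orbit of a single non-singular integral point, checking that the divisibility conditions cutting out $H(t)$ factor through $\mathcal U(\mathcal O_K/\delta_0 dt^3)$ and that distinct units give distinct $Y$-coordinates. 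What your approach buys is an explicit description of the infinite family (essentially re-deriving the mechanism visible in the $\mathbb Q(\sqrt2)$ example and in the real-quadratic theorem later in the paper); what it costs is that your ``only if'' direction still leans on the completeness of \texttt{INTEGRAL-POINTS2A} to put the given non-singular point into the normalized form $\psi(t\eta\varepsilon,\delta_0)$ --- though this can be avoided by arguing directly with a preimage $[U_0:V_0]$ and units $\varepsilon\equiv 1\pmod{dU_0^3V_0}$. Two small slips worth fixing: the points of $\mathcal G$ with $X=0$ do \emph{not} force $a=0$ (they are the roots of $cY^4+bY^2+a$, of which there are at most four, which is all you need); and Siegel's theorem is not what gives finiteness in the ``if'' direction here, since $g=0$ and $|\mathcal G_\infty|=2$ --- that case is precisely the one Siegel leaves open and that \cite{Alvanos-Bilu-Poulakis2009} settles.
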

\begin{proof}
Let $C$ be an affine algebraic curve of genus $g$ defined over a number field $K$ and denote by $C_\infty$ the set of infinity places of the field $\overline{K}(C)$. Siegel \cite{Siegel1929} proved that if $g>0$ or $|C_\infty |>2$ then $C(\mathcal O_K)$ is finite. However, $C(\mathcal O_K)$ may be finite if $g=0$ and $|C_\infty|\le 2$. These last cases where treated by Alvanos, Bilu and Poulakis \cite{Alvanos-Bilu-Poulakis2009}, obtaining a complete characterization of the cases in which $C(\mathcal O_K)$ is finite. In particular, our genus zero curve $\mathcal G$ satisfies $|\mathcal G_\infty|=2$ and both points at infinity are defined over $\mathbb Q$, therefore \cite[Theorem 1.2]{Alvanos-Bilu-Poulakis2009} asserts that $\mathcal G(\mathcal O_K)$ is finite if and only if $K=\mathbb{Q}$ or $K$ is an imaginary quadratic field.
\end{proof}

\begin{remark}
Assume that $d$ divides $a+b+c$ on $\mathcal O_K$ where $K$ neither is $\mathbb{Q}$ nor an imaginary quadratic field, then $\#\mathcal{GP}_{(a,b,c,d)}(K)=\infty$.
\end{remark}

Now we fix our attention on the case of the rational field or an imaginary\footnote{Baer and Rosenberger \cite{Baer-Rosenberger1998} studied the set of solutions over the ring of integers of an imaginary quadratic field of the equation  $ax^2+by^2+cz^2=dxyz$ with the extra requirements $a,b,c\,|\,d$.} quadratic field, since in these cases we have proved that only a finite number of triples in g.p. exist. In the rational case we have the following result:

\begin{theorem}\label{TZ}
Let $a,b,c,d\in\mathbb Z$. For any $u\in\mathbb Z$ denote by $\gamma_u=\frac{cu^4+bu^2+a}{du^3}$ then
$$
\mathcal{GP}_{(a,b,c,d)}(\mathbb Q)=\left\{\left(\pm\gamma_u,u\gamma_u,\pm u^2\gamma_u\right)\,\Big|\, u^2|a,\,u>0,\,du^3|(cu^4+bu^2+a)\right\}.
$$
Furthermore, if $a$ is squarefree and $\gamma=\frac{a+b+c}{d}$, then 
$$
\mathcal{GP}_{(a,b,c,d)}(\mathbb Q)=
\left\{
\begin{array}{ccl}
\{(\pm\gamma,\gamma,\pm\gamma)\} & & \mbox{if $d|(a+b+c)$},\\[2mm]
\emptyset & & \mbox{otherwise}.
\end{array}
\right.
$$
\end{theorem}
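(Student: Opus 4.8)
The plan is to use the bijection \eqref{biye} of Section~\ref{s0} to turn the description of $\mathcal{GP}_{(a,b,c,d)}(\mathbb{Q})$ into a finite divisibility problem, and then solve that problem by elementary arithmetic.

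\textbf{Reduction.} Specializing \eqref{biye} to $K=\mathbb{Q}$, so that $\mathcal{O}_K=\mathbb{Z}$, the set $\mathcal{GP}_{(a,b,c,d)}(\mathbb{Q})$ is in bijection with the set of pairs $(X,Y)\in\mathbb{Z}^2$ with $X\neq 0$ and $F(X,Y)=cY^4-dXY^3+bY^2+a=0$, the triple attached to $(X,Y)$ being $(X,XY,XY^2)$. As a geometric progression has nonzero ratio, we also have $Y=z/y\neq 0$, so I would look for $(X,Y)\in\mathbb{Z}^2$ with $XY\neq 0$ and $F(X,Y)=0$.

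\textbf{Solving the divisibility.} Given such a pair, write $Y=\epsilon u$ with $u=|Y|\geq 1$ and $\epsilon\in\{\pm 1\}$. From $F(X,Y)=0$ one gets $dXY^3=cY^4+bY^2+a$, hence $dY^3\mid cY^4+bY^2+a$, i.e.\ $du^3\mid cu^4+bu^2+a$, and then $X=(cY^4+bY^2+a)/(dY^3)=\epsilon\,\gamma_u$ with $\gamma_u$ as in the statement. Since $u^2$ divides both $du^3$ and $cu^4+bu^2$, this divisibility forces $u^2\mid a$, so only finitely many $u$ arise as soon as $a\neq 0$. Conversely, any $u\geq 1$ with $du^3\mid cu^4+bu^2+a$ and any sign $\epsilon$ produce a point $(X,Y)=(\epsilon\gamma_u,\epsilon u)$ of $\mathcal{G}(\mathbb{Z})$, which we keep provided $\gamma_u\neq 0$ (the value $\gamma_u=0$ would give the trivial triple). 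Reading off the attached triple for $\epsilon=+1$ and $\epsilon=-1$ — and observing that the solution set is invariant under changing the signs of $x$ and $z$, but not under changing all three signs — yields exactly the triples $(\pm\gamma_u,u\gamma_u,\pm u^2\gamma_u)$ with the outer signs equal, which is the first displayed formula. (The recorded condition $u^2\mid a$ is in fact implied by $du^3\mid cu^4+bu^2+a$, but I would keep it since it is what bounds the search.)

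\textbf{The squarefree case, and the main difficulty.} If $a$ is squarefree then $a\neq 0$ and the only $u\geq 1$ with $u^2\mid a$ is $u=1$; hence $Y=\pm 1$, $\gamma_1=(a+b+c)/d=\gamma$, the divisibility condition becomes $d\mid a+b+c$, and the two pairs $(X,Y)=(\gamma,1)$ and $(X,Y)=(-\gamma,-1)$ give the triples $(\gamma,\gamma,\gamma)$ and $(-\gamma,\gamma,-\gamma)$, that is $\{(\pm\gamma,\gamma,\pm\gamma)\}$ when $d\mid a+b+c$ and nothing otherwise. The argument is entirely elementary once the reduction of Section~\ref{s0} is available, so I do not expect a real obstacle; the only points that need care are the sign bookkeeping under $Y\mapsto -Y$ and the degenerate values $a=0$ or $a+b+c=0$, which are absorbed by the convention that a geometric progression has nonzero ratio together with the exclusion of the trivial triple $(0,0,0)$.
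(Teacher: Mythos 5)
Your proposal is correct and follows essentially the same route as the paper: parametrize the progression, substitute into the equation, divide out the common square, and read off the divisibility conditions $u^2\mid a$ and $du^3\mid(cu^4+bu^2+a)$, with the same sign bookkeeping (the paper phrases it as $\gamma_{-u}=-\gamma_u$ directly on the triple rather than through the curve $\mathcal G$, but the computation is identical). The squarefree case is handled in both by noting that $u^2\mid a$ forces $u=\pm1$.
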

\begin{proof}
Let $x,y,z\in \mathbb Z$ be a triple in g.p. to the equation $ax^2+by^2+cz^2=dxyz$. That is, there exist $u,v\in \mathbb Z$ such that $v\ne 0$ and $x=v, y=vu, z=vu^2$. Then $av^2+bu^2v^2+cv^2u^4=dv^3u^3$. Dividing by $v^2$ we obtain the simpler equation: $a+bu^2+cu^4-dvu^3=0$. This implies $u^2|a$ and $v=\gamma_u$. In particular, since $v\in\mathbb Z$ we have $du^3|(cu^4+bu^2+a)$. Finally, we may assume $u>0$ since $\gamma_{-u}=-\gamma_u$ and therefore if $(\gamma_u,u\gamma_u,u^2\gamma_u)\in \mathcal{GP}_{(a,b,c,d)}(\mathbb Q)$ then $(-\gamma_u,u\gamma_u,-u^2\gamma_u)\in \mathcal{GP}_{(a,b,c,d)}(\mathbb Q)$.

Now, from the condition $u^2|a$, it is clear that if $a$ is squarefree then $u=\pm 1$ must hold.
\end{proof}

\begin{remark}
Notice that the condition on the squarefreeness of $a$ is necessary. For example, $\mathcal{GP}_{(4,1,1,1)}(\mathbb Q)=\{(\pm 6, 6, \pm 6), (\pm 3, 6, \pm 12)\}$.
\end{remark}

The next two results give complete descriptions on the case of generalized Markoff triples in g.p. over number fields of very low degree, that is one or two. Notice that $(x,y,z)$ is a triple in g.p. to the equation $x^2+y^2+z^2=dxyz$ if and only if $(x,-y,z)$ is a triple in g.p. to the equation $x^2+y^2+z^2=-dxyz$. Then we will assume that $d$ is positive. Now, the first result describes precisely the cases in which only finitely many triples in g.p. exist. 

\begin{theorem}   Let $d,D\in\mathbb{Z}_{>0}$, with $D$ squarefree. If $(d,D)\ne (1,1)$, then 
$$
\mathcal{GP}_{(1,1,1,d)}(\mathbb{Q}(\sqrt{-D}))=\mathcal{GP}_{(1,1,1,d)}(\mathbb{Q})=\left\{
\begin{array}{ccl}
\{(\pm 3,3,\pm 3)\} & & \mbox{if $d=1$},\\
\{(\pm 1,1,\pm 1)\} & & \mbox{if $d=3$},\\
\emptyset & & \mbox{if $d\ne 1,3$},
\end{array}
\right.
$$
and $\mathcal{GP}_{(1,1,1,1)}(\mathbb{Q}(i))=\{(\pm 3,3,\pm 3),(\pm i,-1,\mp i)\}$.
\end{theorem}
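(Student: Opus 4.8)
The plan is to push the problem through the bijection \eqref{biye}, which for $(a,b,c,d)=(1,1,1,d)$ identifies a non-trivial Markoff--Rosenberger triple in g.p. with a pair $(X,Y)=(\alpha,\beta)\in\mathcal G_{(1,1,1,d)}(\mathcal O_K)$, $\alpha\neq0$, where $F(\alpha,\beta)=0$ reads $\beta^4-d\alpha\beta^3+\beta^2+1=0$. Since $\beta=0$ would give $1=0$, we have $\beta\neq0$, and the equation is equivalent to $\alpha=\dfrac{\beta^4+\beta^2+1}{d\beta^3}$; thus the triples are parametrised by those $\beta\in\mathcal O_K$ for which $d\beta^3\mid\beta^4+\beta^2+1$ in $\mathcal O_K$, the associated triple being $(\alpha,\alpha\beta,\alpha\beta^2)$.

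The crucial observation is that any such $\beta$ is a unit of $\mathcal O_K$. Indeed, the identity
$$
(\beta^4+\beta^2+1)-\beta\,(\beta^3+\beta)=1
$$
shows that the ideals $(\beta)$ and $(\beta^4+\beta^2+1)$ are coprime; but $(\beta)^3$ divides $(d\beta^3)$, which in turn divides $(\beta^4+\beta^2+1)$, so $(\beta)^3$ is simultaneously a divisor of and coprime to $(\beta^4+\beta^2+1)$, forcing $(\beta)=\mathcal O_K$. Once $\beta$ is a unit, $d\beta^3\mid\beta^4+\beta^2+1$ is equivalent to $d\mid\beta^4+\beta^2+1$ in $\mathcal O_K$, and $\alpha=\beta^{-3}(\beta^4+\beta^2+1)/d$.

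It then remains to run through the units of $\mathcal O_{\mathbb Q(\sqrt{-D})}$, which by the classical computation (cf. Step 3 of Section \ref{NF}) is $\{\pm1\}$ unless $D=1$, giving $\{\pm1,\pm i\}$, or $D=3$, giving the six sixth roots of unity. For $\beta=\pm1$ one has $\beta^4+\beta^2+1=3$, so, using $\mathbb Q\cap\mathcal O_K=\mathbb Z$, one needs $d\mid3$, i.e. $d\in\{1,3\}$; this recovers exactly $\mathcal{GP}_{(1,1,1,d)}(\mathbb Q)$, namely $(\pm3,3,\pm3)$ for $d=1$, $(\pm1,1,\pm1)$ for $d=3$, and $\emptyset$ otherwise (Theorem \ref{TZ}). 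For $\beta=\pm i$ (possible only when $D=1$) one has $\beta^4+\beta^2+1=1$, forcing $d=1$ and $\alpha=\beta^{-3}=\mp i$, hence the extra pair $(\pm i,-1,\mp i)$. For $\beta$ a primitive cube or sixth root of unity (possible only when $D=3$), $\beta^2$ is a primitive cube root of unity, so $\beta^4+\beta^2+1=0$ and $\alpha=0$, which is excluded. Collecting cases: when $D\neq1$ no unit other than $\pm1$ contributes, so the set equals $\mathcal{GP}_{(1,1,1,d)}(\mathbb Q)$; when $D=1$ the units $\pm i$ contribute only for $d=1$, which is precisely the excluded case $(d,D)=(1,1)$, producing the additional triples $(\pm i,-1,\mp i)$.

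I do not expect a serious obstacle: once the reduction to units is in place, everything is a finite check. The only points needing care are to keep the divisibility assertions inside $\mathcal O_K$ until the identity $\mathbb Q\cap\mathcal O_K=\mathbb Z$ is invoked, and to verify carefully the $\mathbb Q(\sqrt{-3})$ case, where the genuinely new units are exactly the ones that make the numerator $\beta^4+\beta^2+1$ vanish and hence yield no non-trivial triple.
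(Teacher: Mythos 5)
Your proof is correct, and it reaches the same finite check as the paper (enumerate the units $\beta$ of $\mathcal O_{\mathbb Q(\sqrt{-D})}$ and test $d\mid \beta^4+\beta^2+1$), but it gets there by a different and more self-contained route. The paper's proof runs the general algorithm of Section~\ref{NF}: it computes $\delta_0=1$, deduces $M=\{1\}$ from the positivity of norms in an imaginary quadratic field, and then reads off $H(1)$ from the divisibility condition; the reduction to units is thus inherited from the correctness of the Alvanos--Poulakis machinery. You instead prove directly that the $Y$-coordinate $\beta$ must be a unit, via the identity $(\beta^4+\beta^2+1)-\beta(\beta^3+\beta)=1$, which makes $(\beta)$ coprime to $(\beta^4+\beta^2+1)$ while $(\beta)^3$ divides it. This is a clean elementary argument that makes the theorem independent of the algorithm (and of \cite{Alvanos-Poulakis2011}); what the paper's approach buys in exchange is uniformity, since the same machine is reused for the real quadratic and general cases. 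Your case analysis is complete and matches the statement: $\beta=\pm1$ gives $3/d$ and hence the rational triples exactly when $d\in\{1,3\}$ (correctly passing through $\mathbb Q\cap\mathcal O_K=\mathbb Z$), $\beta=\pm i$ contributes $(\pm i,-1,\mp i)$ only for $(d,D)=(1,1)$, and the primitive sixth and cube roots of unity in $\mathbb Q(\sqrt{-3})$ annihilate the numerator and so yield only the excluded trivial solution, exactly as the points $(0,\zeta_6^l)$ are discarded in the paper's table.
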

\begin{proof} 
First notice that the second equality, for $(d,D)\ne (1,1)$, is a consequence of Theorem \ref{TZ} with $a=b=c=1$. To prove the first equality we use the algorithm of section \ref{NF}. We have that $\delta_0=1$. Let  $K=\mathbb{Q}(\sqrt{-D})$ then $M=\{1\}$, since on an imaginary quadratic field all elements have positive norm. Now we should compute the unit group of $\mathcal O_K$. But it is well--known that $\mathcal U(\mathcal O_K)=\langle\zeta_k\rangle,
$ when $k=2$ ($\zeta_2=-1$) if $D\ne 1,3$, $k=4$ ($\zeta_4=i$) if $D=1$ and $k=6$ ($\zeta_6=(1+\sqrt{-3})/2$) if $D=3$. Next step is to compute the set $H(1)$: in this case its elements are $\zeta_k^l$ for $0\le l<k$ such that $d|(\zeta_k^{4l}+\zeta_k^{2l}+1)$. It is a straightforward computation to determine $H(1)$ depending on $D$ and $d$:
$$
H(1)=\left\{
\begin{array}{ccl}
\emptyset & & \mbox{if $D\ne 3$ and $d\ne 1,3$},\\
\{\pm 1\} & & \mbox{if $D\ne 1,3$ and $d=1,3$},\\
\{\zeta_6^l\,|\,l=1,2,4,5\} & & \mbox{if $D=3$ and $d\ne 1,3$},\\
\{\zeta_6^l\,|\,0\le l<6\}& & \mbox{if $D=3$ and $d=1,3$},\\
\{\zeta_4^l\,|\,0\le l<4\} & & \mbox{if $D=1$ and $d=1$},\\
\{\pm 1\} & & \mbox{if $D=1$ and $d=3$}.\\
\end{array}
\right.
$$
Then the algorithm outputs:
$$
\mathcal G(\mathcal O_K)=\left\{\left(\frac{\eta^4+\eta^2+1}{d\eta^3},\eta\right)\,\Big|\, \eta\in H(1)\right\}.
$$
That is
$$
\mathcal G(\mathcal O_K)=\left\{
\begin{array}{ccl}
\emptyset & & \mbox{if $D\ne 3$ and $d\ne 1,3$},\\
\{\pm (3,1)\} & & \mbox{if $D\ne 1,3$ and $d=1$},\\
\{\pm (1,1)\} & & \mbox{if $D\ne 3$ and $d=3$},\\
\{(0,\zeta_6^l)\,|\,l=1,2,4,5\} & & \mbox{if $D=3$ and $d\ne 1,3$},\\
\{(0,\zeta_6^l)\,|\,l=1,2,4,5\}\cup\{\pm (3,1)\}& & \mbox{if $D=3$ and $d=1$},\\
\{(0,\zeta_6^l)\,|\,l=1,2,4,5\}\cup\{\pm (1,1)\}& & \mbox{if $D=3$ and $d=3$},\\
\{\pm (3,1),\pm(i,i)\} & & \mbox{if $D=1$ and $d=1$}.
\end{array}
\right.
$$
Therefore, the bijection given by (\ref{biye}) gives:
$$
\mathcal{GP}_{(1,1,1,d)}(\mathbb{Q}(\sqrt{-D}))=\left\{
\begin{array}{cl}
\emptyset & \mbox{if $d\ne 1,3$},\\
\{(\pm 1,1,\pm 1)\} & \mbox{if $d=3$},\\
\{(\pm 3,3,\pm 3)\} & \mbox{if $D\ne 1$ and $d=1$},\\
\{(\pm 3,3,\pm 3),(\pm i,-1,\mp i)\} & \mbox{if $D=1$ and $d=1$}.
\end{array}
\right.
$$
\end{proof}

\begin{remark} In particular the previous result proves:
$$
\bigcup_{d,D\in\mathbb Z_{>0}}\mathcal{GP}_{(1,1,1,d)}(\mathbb{Q}(\sqrt{-D}))=\left\{
(\pm 1,1,\pm 1),(\pm 3,3,\pm 3),,(\pm i,-1,\mp i)\}\right.
$$
\end{remark}

\begin{remark} Note that a similar study for fixed $a,b,c\in\mathbb Z$, with $a$ squarefree, could be done. That is, to compute explicitly $\mathcal{GP}_{(a,b,c,d)}(\mathbb{Q}(\sqrt{-D}))$ and $\mathcal{GP}_{(a,b,c,d)}(\mathbb{Q})$ for any $d,D\in\mathbb{Z}_{>0}$, with $D$ squarefree.
\end{remark}
Once we have treated the cases where there are only a finite number of generalized Markoff triples in g.p., we are going to give a complete description of the set of generalized Markoff triples in g.p. over a real quadratic field. In this case, we will have infinitely many such triples or none at all.
\begin{theorem}
Let $D$ be a squarefree positive integer and $\varepsilon_D$ be the fundamental unit of the real quadratic field $K=\mathbb Q(\sqrt{D})$. Denote by $n$ the order of the class of $\varepsilon_D$ in $\mathcal U (\mathcal O_K/d)$ and define the set
$$
\mathcal H=\left\{k\in\{0,\dots, n\}\,|\,\mbox{$\varepsilon_D^{4k}+\varepsilon_D^{2k}+1\equiv 0\,(\mbox{mod$(d\,\mathcal O_K)$}) $}\right\}.
$$
Then
$$
\displaystyle \mathcal{GP}_{(1,1,1,d)}(\mathbb Q(\sqrt{D}))=\!\bigcup_{k\in\mathcal H} \bigcup_{z\in\mathbb Z}\!\left\{(\pm\beta,\beta \gamma,\pm\beta\gamma^2 )\,\Big|\,\gamma=\varepsilon_D^{nz+k},\,\beta=\frac{1}{d}\sum_{j=0}^2{\gamma^{1-2j}}\right\}.
$$
\end{theorem}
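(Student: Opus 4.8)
The plan is to apply, essentially verbatim, the algorithm of Section~\ref{NF} to the curve $\mathcal G_{(1,1,1,d)}$ over $K=\mathbb Q(\sqrt D)$, and then to transport its output to $\mathcal{GP}_{(1,1,1,d)}(K)$ through the bijection~(\ref{biye}). First I would run Steps~1--4 with $a=b=c=1$. Since $c=1$ and $d\in\mathbb Z$ both lie in $\mathbb Z$, one gets $\widehat c=\widehat d=1$, $E=L=\mathbb Q$, and therefore $\delta_0=\gcd(c,d)=1$; since $a=1$, the set $M$ consists of the elements of $\mathcal O_K$ whose norm divides $1$, i.e.\ of the units, so modulo units $M=\{1\}$. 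For Step~3, Dirichlet's Unit Theorem gives $\mathcal U(\mathcal O_K)=\langle-1\rangle\oplus\langle\varepsilon_D\rangle$ because a real quadratic field has $r_1=2$, $r_2=0$, hence unit rank $1$ and torsion $\{\pm1\}$. For Step~4, with $t=1$ and $\delta_0=1$ the relevant modulus $\delta_0 d t^3$ equals $d$, so $\tau(1,1)$ is exactly the integer $n$ in the statement.

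The core of the argument is Step~5. With $\delta_0=t=1$, membership of $\eta=\pm\varepsilon_D^{\,l}$ (for $0\le l<n$) in $H(1)$ is the condition $\psi(\eta,1)\in\mathcal O_K^2$; writing the first coordinate of $\psi([\eta:1])$ as $(\eta^4+\eta^2+1)/(d\eta^3)$ and using that $\eta$ (hence $\eta^3$) is a unit, this collapses to the single divisibility $d\mid(\eta^4+\eta^2+1)$ in $\mathcal O_K$. Only even powers of $\eta$ occur, so the sign of $\eta$ is immaterial and the condition becomes $\varepsilon_D^{4l}+\varepsilon_D^{2l}+1\equiv 0\pmod{d\mathcal O_K}$; consequently $H(1)=\{\pm\varepsilon_D^{\,k}\mid k\in\mathcal H\}$, where one checks that including $k=n$ in $\mathcal H$ does no harm since $\varepsilon_D^{\,n}\equiv1\pmod{d\mathcal O_K}$. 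Together with $\Theta(1)=\{\varepsilon_D^{\,nz}\mid z\in\mathbb Z\}$, the algorithm then outputs
$$
\mathcal G(\mathcal O_K)=\left\{\psi\!\left(\pm\varepsilon_D^{\,nz+k},1\right)\ \Big|\ k\in\mathcal H,\ z\in\mathbb Z\right\}.
$$

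Finally I would make this explicit and translate it. Writing $\gamma=\varepsilon_D^{\,nz+k}$ and $\beta=\tfrac1d(\gamma+\gamma^{-1}+\gamma^{-3})=\tfrac1d\sum_{j=0}^2\gamma^{1-2j}$, one has $\psi([\gamma:1])=(\beta,\gamma)$, while replacing $\gamma$ by $-\gamma$ (the torsion unit $-1$) gives $\psi([-\gamma:1])=(-\beta,-\gamma)$. Since $\gamma$ is a real number, $\gamma^4+\gamma^2+1>0$, so $\beta\neq0$ and both points lie in the domain of~(\ref{biye}); applying that bijection sends $(\beta,\gamma)$ to $(\beta,\beta\gamma,\beta\gamma^2)$ and $(-\beta,-\gamma)$ to $(-\beta,\beta\gamma,-\beta\gamma^2)$, which is precisely the family $(\pm\beta,\beta\gamma,\pm\beta\gamma^2)$ with equal signs appearing in the statement. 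I do not expect a genuine obstacle here: the proof is a direct run of the algorithm of Section~\ref{NF}, and the only points requiring a little care are the reductions $\delta_0=1$ and $M=\{1\}$, the elimination of the sign of $\eta$ in Step~5 (legitimate because $F$ involves $Y$ through even powers only apart from the $XY^3$ term, which cancels against $d\eta^3$), and the sign bookkeeping through~(\ref{biye}).
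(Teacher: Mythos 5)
Your proposal is correct and follows essentially the same route as the paper: a direct run of the algorithm of Section~\ref{NF} with $a=b=c=1$, $\delta_0=1$, $\tau(1,1)=n$, and $H(1)=\{\pm\varepsilon_D^{k}\mid k\in\mathcal H\}$, transported through the bijection~(\ref{biye}). The only cosmetic difference is that the paper also keeps $\varepsilon_D$ as a second representative in $M$ when $\mathcal N_K(\varepsilon_D)=-1$ and computes $H(\varepsilon_D)$ separately, but that contributes nothing beyond what $H(1)$ already yields, so the two computations produce the same set.
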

\begin{proof}
To prove this result we are going to use the algorithm of section \ref{NF}. First of all, let $\varepsilon_D$ be the fundamental unit of the real quadratic field $K=\mathbb Q(\sqrt{D})$. That is, $\varepsilon_D$ satisfies $\mathcal U(\mathcal O_K)=\{\pm \varepsilon_D^k\,|\,k\in\mathbb Z\}$. To compute $M$ we have only to observe that since $\delta_0=1$ the set $M$ consists on $1$ and $\varepsilon_D$ in the case that $\mathcal N_K(\varepsilon_D)=-1$. Note that this is the so-called negative Pell equation. While the Pell equation $\mathcal N_K(\varepsilon_D)=1$ has always a solution for any positive integer $D$, the negative Pell equation does not always have one, although there exists an effective algorithm to determine, for a fixed $D$, which is the case. 

Now, denote by $n$ the order of the class of $\varepsilon_D$ in $\mathcal U (\mathcal O_K/d)$. Then at step $4$ we must compute $H(t)$ for $t\in M$. Note that $1$ always belongs to $M$, then we have
$$
H(1)=\left\{\pm \varepsilon_D^k\,|\,\mbox{$\varepsilon_D^{4k}+\varepsilon_D^{2k}+1\equiv 0\,(\mbox{mod$(d\mathcal O_K)$}) $},\, k=0,\dots,n-1\right\}.
$$
In the case that $\mathcal N_K(\varepsilon_D)=-1$ we have that $\varepsilon_D\in M$ and
$$H(\varepsilon_D)=\left\{\pm \varepsilon_D^{k}\,|\,\mbox{$\varepsilon_D^{4(k+1)}+\varepsilon_D^{2(k+1)}+1\equiv 0\,(\mbox{mod$(d\mathcal O_K)$}) $},\, k=0,\dots,n-1\right\}.
$$
Finally, we have $\Theta(t)=\left\{ \varepsilon_D^{n z}\,\Big|\,z\in\mathbb Z\right\}$ for any $t\in M$. Then the algorithm outputs
$$
\displaystyle \mathcal {G}_{(1,1,1,d)}(\mathcal O_K)=\{\psi(\pm\varepsilon_D^{k+nz},1)\,|\, k\in \mathcal H,\,z\in\mathbb Z\}.
$$
Then the bijection given by (\ref{biye}) together with the parametrization $\psi$ given in (\ref{parafin}) give the result.
\end{proof}
\begin{remark}
The triples in g.p. over the ring of integer of a real quadratic field of the generalized Markoff equation $x^2+y^2+z^2=dxyz$ have a very special shape. That is, let $(\beta,\gamma\beta,\gamma^2\beta)$ be such a triple. Then $\gamma=\varepsilon_D^{nz+k}$ and $\beta=\frac{1}{d}\sum_{j=0}^2{\gamma^{1-2j}}$ for some $n,k, z\in\mathbb Z$, where $\varepsilon_D$ is the fundamental unit on the real quadratic field $\mathbb Q(\sqrt D)$. Moreover, $\gamma\beta\in\mathbb Z$ since $\gamma\beta=\frac{1}{d}(1+\mathcal{T}race_K(\gamma^2))\in\mathcal O_K$ and the trace of any $\alpha\in \mathcal O_K$ belongs to $\mathbb Z$. But there is something more. The third term of the g.p. satisfies $\gamma^2\beta=\mathcal N_K(\gamma) \beta^\sigma=\pm \beta^\sigma$ where $\sigma$ is the nontrivial automorphism of the Galois group of $\mathbb Q(\sqrt D)$ and the sign depends on $\mathcal N_K(\varepsilon_D)$ and on $n,k, z\in\mathbb Z$.
\end{remark}

\begin{example} We are looking for all Markoff triples in g.p. over $\mathbb Q(\sqrt{5})$. First, the fundamental unit is $\varepsilon_5=(1+\sqrt{5})/2$ whose class in $\mathcal U(\mathcal O_{\mathbb{Q}(\sqrt 5)}/3)$  has order $n=8$. Next, we obtain $\mathcal H=\{0,4,8\}$. Therefore, we conclude:
$$
\displaystyle \mathcal G_{(1,1,1,3)}(\mathbb Q(\sqrt{5}))= \bigcup_{z\in\mathbb Z}\left\{(\pm\beta,\beta \gamma,\pm\beta\gamma^2 )\,\left|\,\gamma=\varepsilon_5^{4z},\,\beta=\frac{1}{3}\sum_{j=0}^2{\gamma^{1-2j}}\right.\right\}.
$$
The following table shows some examples of Markoff triples in g.p. over $\mathbb Q(\sqrt{5})$:\\
\begin{center}
\begin{tabular}{|c|c|}
\hline 
$z$ & Markoff triples in g.p. over $\mathbb Q(\sqrt{5})$\\
\hline
$0$ & $( 1,1,1)$\\
\hline
$1$ & $(56-24\sqrt{5} , 16,56+24\sqrt{5} )$\\
\hline
$2$ & $(17296-7728\sqrt{5} , 736,  17296+7728\sqrt{5})$\\
\hline
$3$ & $(5564321-2488392\sqrt{5}, 34561, 5564321+2488392\sqrt{5})$\\
\hline
$4$ & $(1791660256-801254496\sqrt{5}, 1623616, 1791660256+801254496\sqrt{5})$\\
\hline
\end{tabular}
\end{center}

\end{example}
Finally, we present a partial result valid for any number field.
\begin{theorem} Let $K$ be a number field and $a,b,c,d\in\mathcal{O}_K$ such that  $a,d\in\mathcal U(\mathcal{O}_K)$. For any $u\in\mathcal{U}(\mathcal{O}_K)$ denote by $\gamma_u=d^{-1}(cu+bu^{-1}+au^{-3})$. Then
$$
\mathcal{GP}(a,b,c,d,K)=\{(\gamma_u,u \gamma_u, u^2 \gamma_u) \,|\, u\in\mathcal{U}(\mathcal{O}_K)\}.
$$
\end{theorem}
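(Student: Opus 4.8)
The strategy is to pass to the genus‑zero curve $\mathcal G=\mathcal G_{(a,b,c,d)}$ through the bijection (\ref{biye}) and to exploit that forcing $a$ and $d$ to be units leaves essentially no freedom in the quartic $F(X,Y)=cY^4-dXY^3+bY^2+a$. Indeed, a non‑trivial Markoff--Rosenberger triple in g.p.\ over $\mathcal O_K$ is, by (\ref{biye}), of the form $(X,XY,XY^2)$ with $(X,Y)\in\mathcal G(\mathcal O_K)$ and $X\neq 0$, so the theorem reduces to describing $\mathcal G(\mathcal O_K)$.

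For the forward direction I would start from a solution $(X,Y)\in\mathcal O_K^2$ of $cY^4-dXY^3+bY^2+a=0$ and rewrite it as
$$
a=Y^2\bigl(dXY-cY^2-b\bigr),
$$
so that $Y^2\mid a$ in $\mathcal O_K$; since $a\in\mathcal U(\mathcal O_K)$, this forces $Y^2$, hence $Y$, to be a unit. Writing $u:=Y$, the element $dY^3$ is then a unit (in particular nonzero), so one may solve
$$
X=\frac{cY^4+bY^2+a}{dY^3}=d^{-1}\bigl(cu+bu^{-1}+au^{-3}\bigr)=\gamma_u,
$$
which lies in $\mathcal O_K$ precisely because $d^{-1},u^{-1},u^{-3}\in\mathcal O_K$. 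Hence every point of $\mathcal G(\mathcal O_K)$ is $(\gamma_u,u)$ for some $u\in\mathcal U(\mathcal O_K)$, with associated triple $(\gamma_u,u\gamma_u,u^2\gamma_u)$. For the reverse direction I would run this backwards: for arbitrary $u\in\mathcal U(\mathcal O_K)$ the same divisibility gives $\gamma_u\in\mathcal O_K$, and clearing the denominator $du^3$ in the definition of $\gamma_u$ yields exactly $cu^4-d\gamma_u u^3+bu^2+a=0$, i.e.\ $(\gamma_u,u)\in\mathcal G(\mathcal O_K)$, which maps under (\ref{biye}) to $(\gamma_u,u\gamma_u,u^2\gamma_u)$. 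Putting the two inclusions together gives the asserted equality.

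I do not expect any serious obstacle: the whole theorem rests on the single observation $Y^2\mid a$, after which being a unit is both necessary and sufficient. The only point requiring a little care is the bookkeeping between ``point of $\mathcal G(\mathcal O_K)$'' and ``non‑trivial triple'': the triple attached to $(\gamma_u,u)$ degenerates to $(0,0,0)$ exactly when $\gamma_u=0$, equivalently $a=-u^2(cu^2+b)$, and I would dispose of this either by excluding such $u$ from the right‑hand side or by noting that it is harmless in the intended applications. Everything else is formal manipulation inside $\mathcal O_K$ and needs no appeal to the machinery of the earlier sections.
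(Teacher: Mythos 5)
Your proof is correct, but it takes a genuinely different route from the paper. The paper proves this theorem by instantiating the general algorithm of Section~\ref{NF}: since $a,d\in\mathcal U(\mathcal O_K)$ one gets $\delta_0=1$, the set $M$ collapses to classes of units, every $\tau(i,t)$ equals $1$, hence $H(t)=\mathcal U(\mathcal O_K)_{tors}$ and $\Theta(t)=\mathcal U(\mathcal O_K)_{free}$, and the output of the algorithm is $\bigcup_{u\in\mathcal U(\mathcal O_K)}\{\psi(u,1)\}$; the correctness of the conclusion therefore rests on the correctness of the Alvanos--Poulakis machinery. You instead argue directly on the curve: rewriting $F(X,Y)=0$ as $a=Y^2(dXY-cY^2-b)$ gives $Y^2\mid a$, so $Y$ is forced to be a unit, and then $X=\gamma_Y$ is automatically integral because $d$ and $Y$ are units; the converse is a one-line verification. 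This is shorter, self-contained, and arguably more transparent (it is the unit-ring analogue of the divisibility argument the paper itself uses in the proof of Theorem~\ref{TZ} over $\mathbb Z$), at the cost of not illustrating the general algorithm, which is part of the paper's expository purpose. One small point in your favour: you flag that the triple attached to $(\gamma_u,u)$ degenerates to $(0,0,0)$ exactly when $cu^4+bu^2+a=0$, so such $u$ should strictly be excluded from the right-hand side for the stated equality with the set of \emph{non-trivial} triples; the paper's proof silently ignores this edge case.
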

\begin{proof} 
Let us apply the algorithm of section \ref{NF}.  Now, since $a,d\in\mathcal U(\mathcal{O}_K)$ we have $\delta_0=1$, $M=\left\{t\in\mathcal O_K\,|\, \mathcal N_K(t)=\pm 1\right\}_{/ \sim}$ and $\tau(i,t)=1$ for $i=1,\dots,r=\mbox{rank}_{\mathbb Z}\,\mathcal U(\mathcal O_K)$ and any $t\in M$. Therefore, if $t\in M$ we have that $H(t)=\mathcal U(\mathcal O_K)_{tors}$ and $\Theta(t)=\mathcal U(\mathcal O_K)_{free}$, the torsion and free part of the unit group $\mathcal U(\mathcal O_K)$ respectively. Then the algorithm outputs
$$
\displaystyle \mathcal G(\mathcal O_K)=\bigcup_{t\in M}\{\psi(t\eta\varepsilon,1)\,|\, \eta\in \mathcal U(\mathcal O_K)_{tors}\,,\, \varepsilon\in\mathcal U(\mathcal O_K)_{free}\}=\!\!\!\!\bigcup_{u\in \mathcal U(\mathcal O_K)}\!\!\{\psi(u,1)\}.
$$
Then we conclude the proof using (\ref{biye}) and (\ref{parafin}).
\end{proof}

	
{\bf Acknowledgements.}  We would like to thank Paraskevas Alvanos and Dimi\-trios Poulakis for several useful discussions on their papers; and José M. Tornero and David Torres Teigell for their careful readings of a preliminary version of this manuscript. 

We wish to thank the anonymous referee whose comments and suggestions have been very useful; in particular for giving us a simplified proof of Theorem \ref{TZ}.

\end{document}